\newtheorem{definition}{Definition}
\newtheorem{Theo}{Theorem}
\newtheorem{proposition}[Theo]{Proposition}
\newcommand{\cand}{\text{ and }}
\newenvironment{proof}[1][Proof]{\paragraph{{#1}}}%
                {{\hfill\(\Box\)\\}}
\newtheorem{corollary}[Theo]{Corollary}
\newcommand{\fall}[1]{{\forall\,{#1},\ }}
\newcommand{\fexist}[1]{{\exists\,{#1}\,{:}\ }}
\newcommand{\mb}[1]{{\bf #1}}
\newcommand{\mf}[1]{{\mathfrak #1}}
\newcommand{\tphi}{\mathop{\widetilde \varphi}}
\newcommand{\Ext}{\mathop{\mathbf{Ext}}\nolimits}
\DeclareMathOperator{\Fin}{\mathbf{Fin}}
\DeclareMathOperator{\Abs}{\mathbf{AC}}
\DeclareMathOperator{\Sem}{\mathbf{Co}}
\DeclareRobustCommand{\stirling}{\genfrac\{\}{0pt}{}}
\begin{document}

% https://www.smithsonianmag.com/smart-news/great-debate-over-whether-1234-112-180949559/

\title{A Purely Algebraic Summation Method}
\author{Olivier Brunet, Lyc\'ee Vaucanson, Grenoble\footnote{\texttt{olivier.brunet} \ at \ \texttt{normalesup.org}}}
\maketitle

% \cite{elizade}
It is mathematical folklore that
$$ 1 + 2 + 3 + 4 + \ \cdots \ = - \frac 1 {12} $$
This result is usually achieved using elaborate analytical methods, such as zeta function regularization or Ramanujan summation\cite{Hardy_Divergent}. However, in its notebooks, Ramanujan has also provided a very simple derivation which relied instead on algebraic manipulations. Recently, a video\footnote{\texttt{https://www.youtube.com/watch?v=w-I6XTVZXww}} from Numberphile has presented a similar derivation of the result (provoking lots of discussions and debates about the meaning of such an equality\footnote{See \texttt{https://www.nytimes.com/2014/02/04/science/in-the-end-it-all-adds-up-to.html?hpw\&rref=science} or \texttt{https://www.smithsonianmag.com/smart-news/great-debate-over-whether-1234-112-180949559/}}). It can be sketched as follows. Consider the infinite sums:
\begin{align*}
A & = 1 - 1 + 1 - 1 + 1 - 1 + \cdots \\
B & = 1 - 2 + 3 - 4 + 5 - 6 + \cdots \\
S & = 1 + 2 + 3 + 4 + 5 + 6 + \cdots
\end{align*}
We first have 
$$ A = 1 - (1 + 1 - 1 + 1 - 1 + 1 - 1 + \ \cdots) = 1 - A $$
so that $A = \frac 1 2$. Then,
$$ \begin{array}{r@{\ }c@{\ }r@{\ }c@{\ }r@{\ }c@{\ }r@{\ }c@{\ }r@{\ }c@{\ }r@{\ }c@{\ }r@{\ }c@{\ }r}
B - A & = & 1 & - & 2 & + & 3 & - & 4 & + & 5 & - & 6 & + & \cdots \\
& - & 1 & + & 1 & - & 1 & + & 1 & - & 1 & + & 1 & + & \cdots \\
& = & 0 & - & 1 & + & 2 & - & 3 & + & 4 & - & 5 & + & \cdots \\
& = & \rlap{\!\!\!$- B$}
\end{array} $$
% \begin{align*}
% B - A & = 1 - 2 + 3 - 4 + 5 - 6 + \cdots \\
% & - 1 + 1 - 1 + 1 - 1 + 1 + \cdots \\
% & = 0 - 1 + 2 - 3 + 4 - 5 + \cdots \\
% & = - B
% \end{align*}
so that $B = \frac 1 2 A = \frac 1 4$. Finally,
$$
\begin{array}{r@{\ }c@{\ }r@{\ }c@{\ }r@{\ }c@{\ }r@{\ }c@{\ }r@{\ }c@{\ }r@{\ }c@{\ }r@{\ }c@{\ }r}
S - 4 S & = & 1 & + & 2 & + & 3 & + & 4 & + & 5 & + & 6 & + & \cdots \\
& & & - & 4 & & & - & 8 & & & - & 12 & - & \cdots \\
& = & 1 & - & 2 & + & 3 & - & 4 & + & 5 & - & 6 & + & \cdots \\
& = & B
\end{array}
$$
% \begin{align*}
% S - 4 S & = 1 + 2 + 3 + 4 + 5 + 6 + \cdots \\
% & - 0 - 4 - 0 - 8 - 0 - 12 - \cdots \\
% & = 1 - 2 + 3 - 4 + 5 - 6 + \cdots \\
% & = B
% \end{align*}
which leads to the expected result:
$$ S = - \frac 1 3 B = - \frac 1 {12} $$
But this derivation, simple as it is, is usually considered as less rigorous than those using more elaborate analytical methods. One reason, in particular, is that in the derivation of the value of $S$, one needs to shift the terms of $-4 S$, an operation leading to potential difficulties.

However, this derivation is indeed perfectly rigourous, and in this article, we will define a general algebraic construction which we will use as a framework for expressing this derivation and, more generally, for providing a new summation method.

\section{An Algebraic Construction} \label{sec:construction}

In the following, $(\mf A, \otimes)$ will denot a unital commutative algebra over a field $\mb K$.

\begin{definition}
Let $M$ be a subalgebra of $\mf A$.
A vector subspace $F$ of $\mf A$ is \emph{$M$-stable}~if:
$$ M \subseteq F \qquad \hbox{and} \qquad \fall {m \in M, u \in F} m \otimes u \in F $$
Moreover, an $M$-form on $F$ is a linear form $\varphi: F \rightarrow \mb K$ such~that
$$ \varphi(1) = 1 \qquad \hbox{and} \qquad \fall {m \in M, u \in F} \varphi(m \otimes u) = \varphi(m) \varphi(u) $$
\end{definition}

Consider now an $M$-form $\varphi$ defined on an $M$-stable subspace $F$. Given $m \in M$ and $x \in \mf A$, if $\varphi(m) \neq 0$ and $m \otimes x \in F$, we define
$$ \widetilde \varphi_m(x) = \frac {\varphi(m \otimes x)}{\varphi(m)} $$
If $n \in M$ is also such that $\varphi(n) \neq 0$ and $n \otimes x \in F$, then
\begin{multline*}
\widetilde \varphi_n(x) = \frac {\varphi(n \otimes x)}{\varphi(n)} = \frac {\varphi(m) \varphi(n \otimes x)}{\varphi(m) \varphi(n)} = \frac {\varphi\bigl(m \otimes (n \otimes x)\bigr)}{\varphi(m \otimes n)} \\
= \frac {\varphi\bigl(n \otimes (m \otimes x)\bigr)}{\varphi(n \otimes m)} = \frac {\varphi(n) \varphi(m \otimes x)}{\varphi(n) \varphi(m)} = \frac {\varphi(m \otimes x)}{\varphi(m)} = \widetilde \varphi_m(x)
\end{multline*}
This observation suggests the following definition.
\begin{definition}
Given a subalgebra $M$ of $\mf A$, an $M$-stable subspace $F$ of $\mf A$ and an $M$-form $\varphi$ on~$F$, we define the $M$-extension of $F$ w.r.t.~$\varphi$~as
$$ \Ext_M(F, \varphi) = (\widetilde F, \widetilde \varphi) $$
where
$$ \widetilde F = \bigl\{ x \in \mf A \bigm| \fexist {m \in M} \varphi(m) \neq 0 \cand m \otimes x \in F \bigr\} % = \bigl\{x \in \mf A \bigm| x/_{\!\varphi, F} \neq \emptyset \bigr\}
$$
and, for all $x \in \widetilde F$, $\widetilde \varphi(x)$ is the common value of all the $\widetilde \varphi_m(x)$ for $m \in M$ such that $\varphi(m) \neq 0$ and $m \otimes x \in F$.
\end{definition}

The next result justifies the term ``extension'':

\begin{proposition} \label{prop:stability}
If $F$ is $M$-stable, then for $(\widetilde F, \widetilde \varphi) = \Ext_M(F, \varphi)$, we have $F \subseteq \widetilde F$ and $\widetilde \varphi|_F = \varphi$.
\end{proposition}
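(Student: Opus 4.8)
The plan is to use the unit $1$ of the algebra as a universal witness. Since $M$ is a (unital) subalgebra of $\mf A$, we have $1 \in M$, and moreover $\varphi(1) = 1 \neq 0$ by the definition of an $M$-form. This single element will simultaneously certify membership of every element of $F$ in $\widetilde F$ and pin down the value of $\widetilde\varphi$ there, so that both claims will follow by simply unwinding the definition of $\Ext_M(F, \varphi)$.

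First I would establish the inclusion $F \subseteq \widetilde F$. Fixing an arbitrary $x \in F$ and taking $m = 1$, I observe that $\varphi(1) = 1 \neq 0$ and that, by unitality, $1 \otimes x = x$, which lies in $F$ by assumption. Hence $x$ satisfies the defining condition
$$ \fexist {m \in M} \varphi(m) \neq 0 \cand m \otimes x \in F, $$
so $x \in \widetilde F$, as required.

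Next I would verify $\widetilde\varphi|_F = \varphi$. Recall that for $x \in \widetilde F$ the value $\widetilde\varphi(x)$ is, by definition, the common value of the quantities $\widetilde\varphi_m(x) = \varphi(m \otimes x)/\varphi(m)$ taken over all admissible $m$, this common value being well defined precisely by the computation carried out just before the definition of $\Ext_M$. For $x \in F$ the element $m = 1$ is admissible by the previous step, so evaluating the common value at $m = 1$ gives
$$ \widetilde\varphi(x) = \widetilde\varphi_1(x) = \frac{\varphi(1 \otimes x)}{\varphi(1)} = \frac{\varphi(x)}{1} = \varphi(x). $$

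I expect no real obstacle here: the argument is a direct application of the definitions once the unit is recognized as a valid witness. The only point requiring genuine care is the hypothesis that $M$, being a subalgebra, contains the unit $1$ of $\mf A$ --- without this the set $\widetilde F$ could be empty even when $F$ is not, and the statement would fail. The unitality identity $1 \otimes x = x$ is what ensures that the witness $m = 1$ reproduces $\varphi$ exactly rather than some rescaled variant of it.
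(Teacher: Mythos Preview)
Your proof is correct and follows exactly the same approach as the paper's: take $m = 1$ as the witness, use $1 \otimes x = x \in F$ together with $\varphi(1) = 1 \neq 0$ to get $x \in \widetilde F$, and then compute $\widetilde\varphi(x) = \widetilde\varphi_1(x) = \varphi(1 \otimes x)/\varphi(1) = \varphi(x)$. Your version is more explicit about why $1 \in M$ (unitality of the subalgebra), a point the paper leaves implicit, but otherwise the arguments are identical.
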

\begin{proof}
For all $x \in F$, $1 \otimes x \in F$ so that, as $\varphi(1) = 1 \neq 0$, $x \in \widetilde F$,~and
$$ \widetilde \varphi(x) = \widetilde \varphi_1(x) = \frac {\varphi(1 \otimes x)}{\varphi(1)} = \varphi(x) $$
\end{proof}

Moreover, clearly, if $F_1 \subseteq F_2$ are two $M$-stables subspaces, and if $\varphi$ is an $M$-form on $F_2$, then $\varphi|_{F_1}$ is an $M$-form on $F_1$ and for $(\widetilde F_1, \widetilde \varphi_1) = \Ext_M(F_1, \varphi|_{F_1})$ and $(\widetilde F_2, \widetilde \varphi_2) = \Ext_M(F_2, \varphi)$, we have $\widetilde F_1 \subseteq \widetilde F_2$ and $\widetilde \varphi_1 = \widetilde \varphi_2 |_{\widetilde F_1}$

\begin{proposition} \label{prop:linearity}
With the previous notations, $\widetilde F$ is a vector subspace of $\mf A$ and $\widetilde \varphi$ is linear.
\end{proposition}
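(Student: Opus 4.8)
The plan is to reduce both claims to one construction: given $x, y \in \widetilde F$ with respective witnesses $m, n \in M$ (so $\varphi(m), \varphi(n) \neq 0$ and $m \otimes x, n \otimes y \in F$), the \emph{product} $m \otimes n$ will serve as a common witness for $x + y$. This is legitimate because $M$ is a subalgebra, so $m \otimes n \in M$, and by multiplicativity of $\varphi$ we have $\varphi(m \otimes n) = \varphi(m)\varphi(n) \neq 0$. Everything else follows by combining $M$-stability of $F$ with the multiplicativity of $\varphi$, together with the well-definedness of $\widetilde\varphi$ already established before the definition of $\Ext_M$.

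First I would verify that $\widetilde F$ is a vector subspace. Closure under scalars is immediate: if $m$ witnesses $x$, then $m \otimes (\lambda x) = \lambda (m \otimes x) \in F$ since $F$ is a subspace, so the same $m$ witnesses $\lambda x$. For closure under addition I use the common witness $m \otimes n$. By commutativity and associativity of $\otimes$, $(m \otimes n) \otimes x = n \otimes (m \otimes x)$, which lies in $F$ because $m \otimes x \in F$, $n \in M$, and $F$ is $M$-stable; symmetrically $(m \otimes n) \otimes y = m \otimes (n \otimes y) \in F$. Since $\otimes$ distributes over addition and $F$ is a subspace, $(m \otimes n) \otimes (x + y) \in F$, so $x + y \in \widetilde F$.

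For the linearity of $\widetilde\varphi$, the scalar case is direct from linearity of $\varphi$: with any witness $m$ for $x$, $\widetilde\varphi(\lambda x) = \varphi(m \otimes \lambda x)/\varphi(m) = \lambda\,\widetilde\varphi(x)$. For additivity I would again evaluate $\widetilde\varphi(x+y)$ through the common witness $m \otimes n$, which is admissible by the first paragraph. The computation relies on each term collapsing back to the individual witness value: $\varphi\bigl((m \otimes n) \otimes x\bigr) = \varphi\bigl(n \otimes (m \otimes x)\bigr) = \varphi(n)\,\varphi(m \otimes x)$, using $n \in M$ and $m \otimes x \in F$. Dividing by $\varphi(m \otimes n) = \varphi(m)\varphi(n)$ recovers exactly $\widetilde\varphi_m(x) = \widetilde\varphi(x)$, and likewise for $y$, whence $\widetilde\varphi(x + y) = \widetilde\varphi(x) + \widetilde\varphi(y)$.

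The step I expect to carry the weight of the argument is simply the recognition that $m \otimes n$ is a valid common witness, since this is what lets me apply linearity and multiplicativity of $\varphi$ on a single denominator. The potential subtlety — that computing $\widetilde\varphi(x+y)$ via $m \otimes n$ might disagree with its value via some other admissible witness — is already neutralized: the identity $\widetilde\varphi_k(x+y) = \widetilde\varphi_{k'}(x+y)$ for all admissible $k, k'$ is precisely the content of the observation preceding the definition of $\Ext_M(F,\varphi)$. Hence, once $m \otimes n$ is shown admissible, I am free to compute $\widetilde\varphi(x+y)$ with it, and the collapsing identities complete the proof.
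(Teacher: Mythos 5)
Your proof is correct and takes essentially the same approach as the paper: both use the product $m \otimes n$ as a common witness, rewrite $(m \otimes n) \otimes (x + y)$ as $n \otimes (m \otimes x) + m \otimes (n \otimes y)$ to get membership in $F$, and then collapse each term via the multiplicativity of $\varphi$ over the denominator $\varphi(m)\varphi(n)$. The only cosmetic difference is that you treat scalar multiplication and addition separately, whereas the paper handles the combination $\lambda u + v$ in a single computation.
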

\begin{proof}
Let $u, v \in \widetilde F$, and let $m, n \in M$ be such that $\varphi(m) \neq 0$, $\varphi(n) \neq 0$, $m \otimes u \in F$ and $n \otimes v \in F$. Let moreover $\lambda \in \mb K$.
One has
\begin{multline*}
(m \otimes n) \otimes (\lambda u + v) = (m \otimes n) \otimes \lambda u + (m \otimes n) \otimes v \\ = n \otimes (m \otimes \lambda u) + m \otimes (n \otimes v) = \lambda \bigl(n \otimes (m \otimes u)\bigr) + m \otimes (n \otimes v)
\end{multline*}
so that $\lambda u + v \in \widetilde F$. Moreover,
\begin{multline*}
\widetilde \varphi(\lambda u + v) = \frac {\varphi\bigl((m \otimes n) \otimes(\lambda u + v)\bigr)}{\varphi(m \otimes n)} = \frac{\varphi\bigl(n \otimes (m \otimes \lambda u) + m \otimes (n \otimes v)\bigr)}{\varphi(m \otimes n)} \\
= \lambda \frac {\varphi(n) \varphi(m \otimes u)}{\varphi(m) \varphi(n)} + \frac {\varphi(m) \varphi(n \otimes v)}{\varphi(m) \varphi(n)} = \lambda \, \widetilde \varphi(u) + \widetilde \varphi(v)
\end{multline*}
\end{proof}
As $M$ is stable by product, it is $M$-stable, so that one can define $\widetilde M = \Ext_M(M, \varphi|_M)$ (we drop the reference to the extension of $\varphi|_M$ as it is the restriction of $\tphi$ to $\widetilde M$).

\begin{proposition}
$\widetilde F$ is $\widetilde M$-stable and $\widetilde \varphi$ is an $\widetilde M$-form on $\widetilde F$.
\end{proposition}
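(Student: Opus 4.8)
The plan is to verify directly the two defining conditions. Recall that ``$\widetilde F$ is $\widetilde M$-stable'' requires $\widetilde M \subseteq \widetilde F$ together with $\mu \otimes u \in \widetilde F$ for all $\mu \in \widetilde M$, $u \in \widetilde F$; and that ``$\widetilde \varphi$ is an $\widetilde M$-form'' additionally requires $\widetilde\varphi(1) = 1$ and $\widetilde\varphi(\mu \otimes u) = \widetilde\varphi(\mu)\,\widetilde\varphi(u)$ on the same range. Two of these are immediate: since $M \subseteq F$ are both $M$-stable, the monotonicity remark preceding Proposition~\ref{prop:linearity} gives $\widetilde M \subseteq \widetilde F$ (with the two extended forms agreeing); and since $1 \in F$, Proposition~\ref{prop:stability} yields $\widetilde\varphi(1) = \varphi(1) = 1$. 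So the substance lies in the closure property and in multiplicativity.

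For these, I would fix $\mu \in \widetilde M$ and $u \in \widetilde F$ and unfold the existential witnesses: choose $m \in M$ with $\varphi(m) \neq 0$ and $m \otimes \mu \in M$, and $n \in M$ with $\varphi(n) \neq 0$ and $n \otimes u \in F$. The key device is to take $p = m \otimes n$, which lies in $M$ (a subalgebra) and satisfies $\varphi(p) = \varphi(m)\varphi(n) \neq 0$, as a \emph{common} witness. Using commutativity and associativity of $\otimes$, I would regroup
$$ p \otimes (\mu \otimes u) = (m \otimes n) \otimes (\mu \otimes u) = (m \otimes \mu) \otimes (n \otimes u). $$
Here $m \otimes \mu \in M$ and $n \otimes u \in F$, so $M$-stability of $F$ places the product in $F$; hence $p$ witnesses $\mu \otimes u \in \widetilde F$.

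Finally, multiplicativity falls out of the same factorisation. Computing $\widetilde\varphi(\mu \otimes u)$ through the witness $p$ and applying the $M$-form property of $\varphi$ to the pair $m \otimes \mu \in M$ and $n \otimes u \in F$,
$$ \widetilde\varphi(\mu \otimes u) = \frac{\varphi\bigl((m\otimes\mu)\otimes(n\otimes u)\bigr)}{\varphi(m)\varphi(n)} = \frac{\varphi(m\otimes\mu)}{\varphi(m)} \cdot \frac{\varphi(n\otimes u)}{\varphi(n)} = \widetilde\varphi(\mu)\,\widetilde\varphi(u), $$
the last equality using $m \otimes \mu \in M \subseteq F$ to read the first quotient as $\widetilde\varphi_m(\mu) = \widetilde\varphi(\mu)$ and the second as $\widetilde\varphi_n(u) = \widetilde\varphi(u)$. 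I expect the only delicate point to be the bookkeeping: keeping track of which factor must lie in $M$ versus merely in $F$, so that every appeal to the $M$-form identity $\varphi(m' \otimes v) = \varphi(m')\varphi(v)$ is legitimate (it needs the left factor in $M$). The regrouping $(m\otimes\mu)\otimes(n\otimes u)$ is engineered precisely so that both the membership and the multiplicativity steps split cleanly across this $M$/$F$ divide; everything else is routine.
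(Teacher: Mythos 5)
Your proof is correct and is essentially the paper's own argument: the paper likewise unfolds the two existential witnesses (its $a, b \in M$ are your $n, m$), takes their product $a \otimes b$ as the common witness, and uses exactly the same regrouping $(b \otimes m) \otimes (a \otimes u)$ to get both membership in $F$ and the splitting of $\varphi$ across the $M$/$F$ divide. The only difference is cosmetic: you explicitly check $\widetilde M \subseteq \widetilde F$ and $\widetilde \varphi(1) = 1$, which the paper leaves to the earlier monotonicity remark and Proposition~\ref{prop:stability}.
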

\begin{proof}
Given $u \in \widetilde F$, $m \in \widetilde M$, we want to show that $m \otimes u \in \widetilde F$, i.e.\ that there exists an $n \in M$ such that $n \otimes m \otimes u \in F$. Let $a, b \in M$ be such that $\varphi(a) \neq 0$, $\varphi(b) \neq 0$, $a \otimes u \in F$ and $b \otimes m \in M$ and let $n = a \otimes b$. We have
$$ n \otimes (m \otimes u) = \underbrace{(b \otimes m)}_{\in M} \otimes \underbrace{(a \otimes u)}_{\in F} \in F $$
so that $m \otimes u \in \widetilde F$, as $\varphi(a \otimes b) \neq 0$. Now,
\begin{multline*}
\tphi(m \otimes u) = \frac {\varphi\bigl((a \otimes b) \otimes (m \otimes u)\bigr)}{\varphi(a \otimes b)} = \frac {\varphi\bigl((b \otimes m) \otimes (a \otimes u)\bigr)}{\varphi(a \otimes b)} \\ = \frac {\varphi(b \otimes m) \varphi(a \otimes u)}{\varphi(b)  \varphi(a)} = \tphi(m) \tphi(u)
\end{multline*}
\end{proof}

\begin{corollary}
$\widetilde M$ is a unital subalgebra of $\mf A$ and $\widetilde \varphi$ is an algebra homomorphism from $\widetilde M$ to $\mb K$.
\end{corollary}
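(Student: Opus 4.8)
The plan is to obtain this corollary as the special case $F = M$ of the proposition immediately preceding it, and then merely to translate the resulting statements back into the standard vocabulary of subalgebras and homomorphisms. As noted just before that proposition, $M$ is itself $M$-stable (being closed under product and containing itself), so the proposition applies with $F = M$ and tells us that $\widetilde M$ is $\widetilde M$-stable and that $\widetilde\varphi$ is an $\widetilde M$-form on $\widetilde M$.

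First I would unfold what ``$\widetilde M$ is $\widetilde M$-stable'' means in this particular case. Specializing the definition of $N$-stability to $N = F = \widetilde M$, the condition $\widetilde M \subseteq \widetilde M$ is vacuous, while the second condition reads $\fall{m, u \in \widetilde M} m \otimes u \in \widetilde M$; that is, $\widetilde M$ is closed under the product $\otimes$. Combining this with Proposition \ref{prop:linearity}, which already shows $\widetilde M$ is a vector subspace of $\mf A$, and observing that $1 \in M \subseteq \widetilde M$ (the inclusion coming from Proposition \ref{prop:stability}), I conclude that $\widetilde M$ is a unital subalgebra of $\mf A$.

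Next I would unfold ``$\widetilde\varphi$ is an $\widetilde M$-form on $\widetilde M$'', again specialized to $N = F = \widetilde M$: this gives $\widetilde\varphi(1) = 1$ together with $\widetilde\varphi(m \otimes u) = \widetilde\varphi(m)\,\widetilde\varphi(u)$ for all $m, u \in \widetilde M$. Together with the linearity of $\widetilde\varphi$ from Proposition \ref{prop:linearity}, these are exactly the defining properties of an algebra homomorphism $\widetilde M \to \mb K$, which completes the argument.

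I would not expect any genuine obstacle here: no new computation is required, and the entire content lies in the recognition that the notion of $N$-stability, when the stable subspace is taken to be $N$ itself, coincides with closure under multiplication, and that an $N$-form on $N$ is precisely a unital multiplicative linear form, i.e.\ an algebra homomorphism. The only point deserving explicit mention is the verification that $1 \in \widetilde M$, which does not follow from stability alone but from the inclusion $M \subseteq \widetilde M$ supplied by Proposition \ref{prop:stability}.
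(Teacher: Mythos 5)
Your proof is correct and follows essentially the same route as the paper's: specialize the preceding proposition to the case $F = M$, then read off closure under product and multiplicativity, combining with Proposition~\ref{prop:linearity} for the vector-space and linearity parts. In fact you are slightly more careful than the paper, which never explicitly notes that $1 \in \widetilde M$ (via $M \subseteq \widetilde M$ from Proposition~\ref{prop:stability}) or that $\widetilde\varphi(1)=1$ -- points needed for ``unital'' and ``homomorphism'' respectively.
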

\begin{proof}
It is $\widetilde M$-stable, so that it is stable by product in addition to being a vector subspace of $\mf A$. Similarly, $\widetilde \varphi$ is linear and preserves products.
\end{proof}

As $\widetilde F$ is $\widetilde M$-stable and $\widetilde \varphi$ is an $\widetilde M$-form on $\widetilde F$, one might want to consider the $\widetilde M$-extension of $\widetilde F$ w.r.t.\ $\tphi$. The next result shows that this is useless.

\begin{proposition}
If $({\widehat F}, {\widehat \varphi}) = \Ext_{\widetilde M}(\widetilde F, \widetilde \varphi)$, then $({\widehat F}, {\widehat \varphi}) = (\widetilde F, \widetilde \varphi)$.
\end{proposition}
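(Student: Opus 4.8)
The plan is to lean on Proposition~\ref{prop:stability}, which already does half the work, and then supply only the reverse inclusion by hand. Applying Proposition~\ref{prop:stability} to the $\widetilde M$-stable subspace $\widetilde F$ equipped with the $\widetilde M$-form $\widetilde \varphi$, I immediately obtain $\widetilde F \subseteq \widehat F$ together with $\widehat \varphi|_{\widetilde F} = \widetilde \varphi$. Hence everything reduces to proving the opposite inclusion $\widehat F \subseteq \widetilde F$: once that is established the two subspaces coincide and, the forms already agreeing on $\widetilde F$, we get $\widehat \varphi = \widetilde \varphi$ outright.

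So let $x \in \widehat F$. By definition of the $\widetilde M$-extension there is some $\mu \in \widetilde M$ with $\widetilde \varphi(\mu) \neq 0$ and $\mu \otimes x \in \widetilde F$. I would then unfold each of these two memberships back down to the base data $(M, F, \varphi)$. Since $\mu \in \widetilde M$, pick $m \in M$ with $\varphi(m) \neq 0$ and $m \otimes \mu \in M$; since $\mu \otimes x \in \widetilde F$, pick $n \in M$ with $\varphi(n) \neq 0$ and $n \otimes (\mu \otimes x) \in F$. The goal is to exhibit a single $p \in M$ with $\varphi(p) \neq 0$ and $p \otimes x \in F$, which is exactly what $x \in \widetilde F$ requires.

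The candidate I would take is $p = n \otimes (m \otimes \mu)$. It lies in $M$ because $n$ and $m \otimes \mu$ both do and $M$ is closed under $\otimes$. For the landing condition, commutativity and associativity let me rewrite $p \otimes x = m \otimes \bigl(n \otimes (\mu \otimes x)\bigr)$, which is an element of $M \otimes F \subseteq F$ by the $M$-stability of $F$.

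The one genuinely substantive point---and the step I expect to be the main obstacle---is verifying $\varphi(p) \neq 0$, since the hypothesis $\widetilde \varphi(\mu) \neq 0$ must first be converted into a statement about $\varphi$. As $m \otimes \mu \in M \subseteq F$ and $\varphi(m) \neq 0$, the very definition of $\widetilde \varphi$ gives $\widetilde \varphi(\mu) = \varphi(m \otimes \mu)/\varphi(m)$, whence $\varphi(m \otimes \mu) = \varphi(m)\,\widetilde \varphi(\mu) \neq 0$. Because $p = n \otimes (m \otimes \mu)$ is a product of two elements of $M$, the $M$-form property yields $\varphi(p) = \varphi(n)\,\varphi(m \otimes \mu)$, a product of two nonzero field elements and hence nonzero. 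This shows $x \in \widetilde F$, establishes the reverse inclusion, and completes the proof.
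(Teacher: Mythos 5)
Your proof is correct and follows essentially the same route as the paper's: unfold the membership $x \in \widehat F$ twice down to the base data $(M, F, \varphi)$ and combine the witnesses by multiplication into a single element of $M$ (your $n \otimes (m \otimes \mu)$ is, up to renaming, the paper's $p \otimes n \otimes m$). If anything, your write-up is the more complete of the two: the step you flagged as the substantive obstacle --- converting $\widetilde \varphi(\mu) \neq 0$ into $\varphi\bigl(n \otimes (m \otimes \mu)\bigr) = \varphi(n)\,\varphi(m)\,\widetilde \varphi(\mu) \neq 0$ --- is exactly the non-vanishing check that the definition of $\widetilde F$ requires but that the paper's own proof passes over in silence, as it only verifies the two membership conditions $p \otimes n \otimes m \in M$ and $p \otimes n \otimes m \otimes x \in F$.
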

\begin{proof}
It is sufficient to prove that $\widehat F \subseteq \widetilde F$. For any $x \in \widehat F$, there exists $m \in \widetilde M$ such that $m \otimes x \in \widetilde F$. But then, there exists $n \in M$ such that $n \otimes m \otimes x \in F$. Finally, as $m \in \widetilde M$, there exists $p \in M$ such that $p \otimes m \in M$. As a consequence, $p \otimes n \otimes m \otimes x \in F$. Now, $p \otimes m \in M$ so that $p \otimes n \otimes m \in M$ and hence $x \in \widetilde F$.

\end{proof}

\begin{proposition}[Cancellation Property]
If $m \in \widetilde M$ is such that $\widetilde \varphi(m) \neq 0$, then
$$ \fall {x \in \mf A} \quad x \in \widetilde F \iff m \otimes x \in \widetilde F $$
\end{proposition}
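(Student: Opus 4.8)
The plan is to handle the two implications separately, since they are of very different difficulty. The forward implication is essentially free: if $x \in \widetilde F$, then because $m \in \widetilde M$ and $\widetilde F$ has already been shown to be $\widetilde M$-stable, we immediately obtain $m \otimes x \in \widetilde F$. Notice that this direction does not even use the hypothesis $\widetilde \varphi(m) \neq 0$; the whole content of the statement lies in the converse.

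For the reverse implication, I would begin by unpacking the two membership facts supplied by the hypotheses. From $m \otimes x \in \widetilde F$ and the definition of $\widetilde F$, there is some $n \in M$ with $\varphi(n) \neq 0$ and $n \otimes m \otimes x \in F$. From $m \in \widetilde M = \Ext_M(M, \varphi|_M)$, there is some $p \in M$ with $\varphi(p) \neq 0$ and $p \otimes m \in M$. The goal is to manufacture a single witness $q \in M$ satisfying $\varphi(q) \neq 0$ and $q \otimes x \in F$, which is exactly the requirement for $x \in \widetilde F$.

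The natural candidate is $q = n \otimes (p \otimes m)$, which lies in $M$ because $M$ is a subalgebra and both $n$ and $p \otimes m$ belong to $M$. Using commutativity and associativity of $\otimes$, I would rewrite $q \otimes x = p \otimes (n \otimes m \otimes x)$; since $n \otimes m \otimes x \in F$ and $p \in M$, the $M$-stability of $F$ yields $q \otimes x \in F$. The one remaining point — and the only place where genuine care is needed — is verifying $\varphi(q) \neq 0$. The subtlety is that $m$ need not lie in $F$, so I cannot directly factor $\varphi(p \otimes m)$ through the $M$-form property; instead I would invoke the defining relation $\varphi(p \otimes m) = \varphi(p)\,\widetilde\varphi(m)$, which is valid precisely because $p \otimes m \in M \subseteq F$ and $\varphi(p) \neq 0$. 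Combining this with the bona fide $M$-form factorization $\varphi(q) = \varphi(n)\,\varphi(p \otimes m)$ gives $\varphi(q) = \varphi(n)\,\varphi(p)\,\widetilde\varphi(m)$, a product of three nonzero scalars, whence $\varphi(q) \neq 0$ and therefore $x \in \widetilde F$. I expect this nonvanishing check to be the main obstacle: it is the unique step that consumes the hypothesis $\widetilde\varphi(m) \neq 0$, and the one where the distinction between $m \in \widetilde M$ and the stronger condition $m \in M$ genuinely matters.
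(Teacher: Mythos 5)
Your proof is correct, but it takes a genuinely different route from the paper's for the reverse implication. The paper disposes of it in one line: if $m \otimes x \in \widetilde F$ with $m \in \widetilde M$ and $\widetilde \varphi(m) \neq 0$, then $x$ belongs to $\widehat F$ by the very definition of $\Ext_{\widetilde M}(\widetilde F, \widetilde \varphi)$, and the immediately preceding proposition ($\widehat F = \widetilde F$) finishes the job. You instead unfold all the definitions and build an explicit witness $q = n \otimes (p \otimes m) \in M$, which amounts to inlining the paper's proof of that preceding proposition (there the witness is written $p \otimes n \otimes m$). What the paper's route buys is brevity, since the idempotence of the extension has already been established. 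What your route buys is completeness: the paper's proof of the idempotence proposition checks that $p \otimes n \otimes m \in M$ and that $(p \otimes n \otimes m) \otimes x \in F$, but never verifies the remaining requirement in the definition of $\widetilde F$, namely $\varphi(p \otimes n \otimes m) \neq 0$. Your computation $\varphi(q) = \varphi(n)\,\varphi(p \otimes m) = \varphi(n)\,\varphi(p)\,\widetilde\varphi(m) \neq 0$ is exactly that missing verification, and as you observe it is the unique place where the hypothesis $\widetilde\varphi(m) \neq 0$ is consumed. One small point of precision: the identity $\varphi(p \otimes m) = \varphi(p)\,\widetilde\varphi(m)$ holds because $p$ is a witness in the definition of $\widetilde\varphi(m)$ for $m \in \widetilde M = \Ext_M(M, \varphi|_M)$ (i.e.\ $\varphi(p) \neq 0$ and $p \otimes m \in M$), rather than because $p \otimes m \in M \subseteq F$; but this is a matter of phrasing, not a gap. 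Overall your argument is self-contained and slightly more careful than the paper's, at the cost of redoing work the paper had already packaged into an earlier result.
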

\begin{proof}
Obviously, as $\widetilde F$ is $\widetilde M$-stable, we have $x \in \widetilde F \implies m \otimes x \in \widetilde F$. Conversely, if $m \otimes x \in \widetilde F$, as $\widetilde \varphi(m) \neq 0$, we deduce that $x \in \widehat F$, i.e.\ $x \in \widetilde F$.
\end{proof}

Finally, we provide a simple criteria for proving that an element of $\mf A$ is not in $\widetilde M$.

\begin{proposition} \label{prop:not_in}
For all $x \in \mf A$, if there exists $m \in M$ such that $\varphi(m) = 0$, $m \otimes x \in F$ and $\varphi(m \otimes x) \neq 0$, then $x \not \in \widetilde F$.
\end{proposition}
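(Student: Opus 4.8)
The plan is to argue by contradiction. Suppose, for the sake of contradiction, that $x \in \widetilde F$. Then by the very definition of $\widetilde F$ there exists some $n \in M$ with $\varphi(n) \neq 0$ and $n \otimes x \in F$. I now have two multipliers at my disposal: the given $m$, for which $\varphi(m) = 0$ while $\varphi(m \otimes x) \neq 0$, and this freshly produced $n$, for which $\varphi(n) \neq 0$.

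The key idea is to evaluate $\varphi$ on the single element $m \otimes n \otimes x$ in two different ways, using commutativity and associativity of $\otimes$ to regroup it as convenient. Reading it as $n \otimes (m \otimes x)$, the factor $m \otimes x$ lies in $F$ and $n \in M$, so the $M$-form property gives $\varphi\bigl(n \otimes (m \otimes x)\bigr) = \varphi(n)\,\varphi(m \otimes x)$, which is nonzero since both factors are nonzero. Reading the same element as $m \otimes (n \otimes x)$, the factor $n \otimes x$ lies in $F$ and $m \in M$, so the $M$-form property gives $\varphi\bigl(m \otimes (n \otimes x)\bigr) = \varphi(m)\,\varphi(n \otimes x) = 0$, because $\varphi(m) = 0$. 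As these two expressions denote the same element of $\mf A$, their images under $\varphi$ must coincide; yet one is nonzero and the other is zero, a contradiction. Hence $x \notin \widetilde F$.

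There is no genuine obstacle here; the only point requiring care is to verify that each invocation of the multiplicativity identity $\varphi(m \otimes u) = \varphi(m)\,\varphi(u)$ is licensed, that is, that in each grouping the left factor really belongs to $M$ and the right factor really belongs to $F$. Both checks are immediate from the hypotheses: $m \otimes x \in F$ is given outright, and $n \otimes x \in F$ is exactly what the assumption $x \in \widetilde F$ supplies. With these in hand the argument closes cleanly.
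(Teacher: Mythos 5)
Your proposal is correct and is essentially identical to the paper's own proof: both argue by contradiction, take $n \in M$ with $\varphi(n) \neq 0$ and $n \otimes x \in F$ from the definition of $\widetilde F$, and evaluate $\varphi(m \otimes n \otimes x)$ in two ways to get $0 = \varphi(m)\,\varphi(n \otimes x) = \varphi(n)\,\varphi(m \otimes x) \neq 0$. Your explicit check that each use of multiplicativity is licensed (left factor in $M$, right factor in $F$) is a nice touch the paper leaves implicit.
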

\begin{proof}
Suppose otherwise, and let $n \in M$ such that $\varphi(n) \neq 0$ and $n \otimes x \in F$. One then has
$$ 0 = \varphi(m) \, \varphi(n \otimes x) = \varphi(m \otimes n \otimes x) = \varphi(n) \, \varphi(m \otimes x) \neq 0 $$
which is clearly absurd.
\end{proof}

\section{Numerical Series and the Cauchy Product}

% \subsection{Definition of the extension}

A context where the previous construction appears naturally is the algebra of complex-valued sequences equipped with the Cauchy product defined~as
$$ \fall {n \in \mb N} (u \otimes v)_n = \sum_{k = 0}^n u_k v_{n - k} $$
In this context, the Mertens theorem states that given two convergent sequences $u$ and~$v$, if at least one of them is absolutely convergent, then their Cauchy product $u \otimes v$ is convergent and verifies
$$ \sum_{k = 0}^\infty (u \otimes v)_k = \Bigl(\sum_{i = 0}^\infty u_i \Bigr) \Bigl(\sum_{j = 0}^\infty v_j \Bigr) $$
Moreover, if both $u$ and $v$ are absolutely convergent, then so is $u \otimes v$.

Let now $\Sem$ (resp. $\Abs$) denote the set of convergent (resp. absolutely convergente) series and define:
$$ \fall{u \in \Sem} \quad \Sigma(u) = \sum_{k = 0}^\infty u_k $$
The Mertens theorem tells us that $\Abs$ is a unital subalgebra of $\mb C^{\mb N}$, that $\Sem$ is $\Abs$-stable, and that $\Sigma$ is an $\Abs$-form on $\Sem$.
It is then possible to define 
$$(\widetilde \Sem, \widetilde \Sigma) = \Ext_{\Abs}(\Sem, \Sigma)$$

\begin{proposition}
This extension is regular, linear and stable.
\end{proposition}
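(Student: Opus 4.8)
The three properties to establish are, I take it: \emph{regularity} (every convergent series lies in $\widetilde \Sem$ and is summed to its ordinary value), \emph{linearity} ($\widetilde \Sem$ is a vector subspace of $\mb C^{\mb N}$ and $\widetilde \Sigma$ is linear on it), and \emph{stability} (writing $Lu$ for the left-shifted sequence $(Lu)_n = u_{n+1}$, one has $u \in \widetilde \Sem \iff Lu \in \widetilde \Sem$, with $\widetilde \Sigma(u) = u_0 + \widetilde \Sigma(Lu)$ in that case). The first two require essentially no new work: regularity is Proposition~\ref{prop:stability} applied to $F = \Sem$, $\varphi = \Sigma$, which gives $\Sem \subseteq \widetilde \Sem$ and $\widetilde \Sigma|_{\Sem} = \Sigma$; and linearity is Proposition~\ref{prop:linearity}. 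So I would dispatch these in a line each and concentrate on stability.

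The real content is stability, and the plan is to reduce the left shift to a Cauchy product. The key observation is that if $e = (1, 0, 0, \dots)$ is the unit and $X = (0, 1, 0, 0, \dots)$, then $X \otimes v$ is the \emph{right} shift of $v$, which yields the decomposition
$$ u = u_0 \, e + X \otimes Lu. $$
Since $X$ is finitely supported it is absolutely convergent, so $X \in \Abs \subseteq \widetilde \Abs$, and crucially $\widetilde \Sigma(X) = \Sigma(X) = 1 \neq 0$.

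From this identity everything follows. If $u \in \widetilde \Sem$, then $X \otimes Lu = u - u_0\, e \in \widetilde \Sem$, as $\widetilde \Sem$ is a vector space and $e \in \Sem \subseteq \widetilde \Sem$; the Cancellation Property applied with $m = X$ then delivers $Lu \in \widetilde \Sem$. Conversely, if $Lu \in \widetilde \Sem$, then $X \otimes Lu \in \widetilde \Sem$ by $\widetilde \Abs$-stability, hence $u \in \widetilde \Sem$. In either case, applying the $\widetilde \Abs$-form $\widetilde \Sigma$ to the decomposition gives
$$ \widetilde \Sigma(u) = u_0 \, \widetilde \Sigma(e) + \widetilde \Sigma(X)\, \widetilde \Sigma(Lu) = u_0 + \widetilde \Sigma(Lu). $$

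The one step deserving care — the crux of the argument — is the invocation of the Cancellation Property to recover $Lu$ from $X \otimes Lu$: this is exactly where the invertibility witnessed by $\widetilde \Sigma(X) = 1 \neq 0$ is indispensable, and it is what makes the otherwise-delicate ``shifting'' operation, flagged as problematic in the introduction, fully rigorous within this framework.
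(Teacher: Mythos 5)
Your proof is correct and rests on exactly the same key idea as the paper's: shifting is Cauchy multiplication by $e_1$ (your $X$), and the Cancellation Property applies because $\widetilde \Sigma(e_1) = 1 \neq 0$. The only difference is one of formulation — the paper states stability as prepend-a-zero invariance, $(u_0, u_1, u_2, \ldots) \in \widetilde \Sem \iff (0, u_0, u_1, u_2, \ldots) \in \widetilde \Sem$ with equal sums, which is literally $v = e_1 \otimes u$ and so needs no decomposition at all, whereas your left-shift version requires the extra (but harmless) step $u = u_0 e_0 + e_1 \otimes Lu$ together with linearity; the two formulations are equivalent once linearity is in hand.
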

\begin{proof}
The regularity (which states that $\fall {\mb u \in \Sem} \widetilde \Sigma(\mb u) = \Sigma (\mb u)$) and linearity follow directly from propositions~\ref{prop:stability} and~\ref{prop:linearity}.
Stability, which states that
$$ (u_0, u_1, u_2, \ldots) \in \widetilde F \iff (0, u_0, u_1, u_2, \ldots) \in \widetilde F$$
and that they have the same sum, follows directly from the cancellation property: as $\varphi(e_1) \neq 0$, if
$$ u = (u_0, u_1, u_2, \ldots) \qquad \hbox{and} \qquad v = (0, u_0, u_1, u_2, \ldots) $$
then we have $e_1 \otimes u = v$, hence
$$ u \in \widetilde F \iff v = e_1 \otimes u \in \widetilde F $$
and, of course, $\tphi(v) = \tphi(e_1) \tphi(u) = \tphi(u)$.
\end{proof}

In the following, the extension of $\Sigma$ will also be denoted~$\Sigma$, dropping the tilde.

\subsection{Particular Sequences}

Let us now review some notable elements of~$\widetilde \Sem$ and, even, of~$\widetilde \Abs$ which, we recall, is a unital subalgebra of~$\mb C^{\mb N}$.

\begin{definition}[Geometric sequences]
For $\alpha \in \mb C$, let us define the geometric sequence
$$ G_\alpha = \bigl(\alpha^k\bigr)_{k \in \mb N} = (1, \alpha, \alpha^2, \alpha^3, \ldots). $$
\end{definition}

\begin{proposition}
For all $\alpha \neq 1$, $G_\alpha \in \widetilde \Abs$ with $\Sigma(G_\alpha) = \dfrac 1 {1 - \alpha}$.
\end{proposition}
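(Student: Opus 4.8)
The plan is to produce directly, from the definition of $\widetilde{\Abs} = \Ext_{\Abs}(\Abs, \Sigma)$, a single absolutely convergent multiplier witnessing that $G_\alpha \in \widetilde{\Abs}$. By that definition, membership requires exhibiting some $m \in \Abs$ with $\Sigma(m) \neq 0$ and $m \otimes G_\alpha \in \Abs$, and the value $\Sigma(G_\alpha)$ is then forced to equal $\Sigma(m \otimes G_\alpha)/\Sigma(m)$. So the whole proof reduces to choosing the right $m$.

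The guiding intuition is that $G_\alpha$ is the coefficient sequence of the generating function $1/(1 - \alpha x)$, which collapses to $1$ upon multiplication by $1 - \alpha x$. Mirroring this, I would take the finite --- hence absolutely convergent --- sequence $m = (1, -\alpha, 0, 0, \ldots) = e_0 - \alpha\, e_1$, where $e_0$ is the Cauchy unit and $e_1 = (0,1,0,\ldots)$ the shift. The only computation is the convolution $m \otimes G_\alpha$: since $m$ is supported on $\{0,1\}$, its coefficient of index $0$ is $1$ and, for $n \geq 1$, it is $\alpha^n - \alpha \cdot \alpha^{n-1} = 0$. Hence $m \otimes G_\alpha = (1,0,0,\ldots) = e_0 \in \Abs$, with $\Sigma(m \otimes G_\alpha) = 1$.

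It then remains only to read off the values. We have $\Sigma(m) = 1 - \alpha$, which is nonzero exactly because $\alpha \neq 1$ --- and this is the sole place the hypothesis is used --- so $G_\alpha \in \widetilde{\Abs}$ and $\Sigma(G_\alpha) = \Sigma(e_0)/\Sigma(m) = 1/(1-\alpha)$. I expect no genuine obstacle: the only insight is spotting the annihilating multiplier $1 - \alpha\, e_1$, after which the argument is a two-line convolution. The point worth stressing is that it is wholly insensitive to the size of $\alpha$, since $m \otimes G_\alpha$ is a \emph{finite} sequence and therefore lies in $\Abs$ for every $\alpha \neq 1$, including the range $|\alpha| \geq 1$ where $G_\alpha$ itself diverges. (Taking $\alpha = -1$ recovers the value $\tfrac{1}{2}$ for the alternating series $A$ of the introduction.)
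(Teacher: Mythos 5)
Your proof is correct and is exactly the paper's argument: the paper's entire proof consists of the identity $G_\alpha \otimes (e_0 - \alpha\, e_1) = e_0$, which is precisely your annihilating multiplier, and your write-up merely spells out the convolution and the division by $\Sigma(e_0 - \alpha\, e_1) = 1 - \alpha \neq 0$ that the paper leaves implicit.
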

\begin{proof}
This is a direct consequence of having $ G_\alpha \otimes (e_0 - \alpha e_1) = e_0 $.
\end{proof}

For $\alpha = -1$, we recognize Grandi's series, so that we have shown that
$$ G_{-1} \in \widetilde \Abs \qquad \hbox{with} \qquad \Sigma(G_{-1}) = \frac {\Sigma(e_0)}{\Sigma(e_0 + e_1)} = \frac 1 2 $$

\begin{proposition}
$$ G_1 \not \in \widetilde \Sem $$
\end{proposition}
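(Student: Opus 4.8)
The plan is to apply Proposition~\ref{prop:not_in} with $M = \Abs$, $F = \Sem$ and $\varphi = \Sigma$. Concretely, it is enough to produce a single sequence $m \in \Abs$ satisfying the three conditions $\Sigma(m) = 0$, $m \otimes G_1 \in \Sem$ and $\Sigma(m \otimes G_1) \neq 0$; the proposition then forces $G_1 \not\in \widetilde \Sem$ directly, with no further computation.

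The candidate is dictated by the geometric-series identity $G_\alpha \otimes (e_0 - \alpha e_1) = e_0$ read at the forbidden value $\alpha = 1$. First I would set $m = e_0 - e_1$, which has finite support and is therefore absolutely convergent, so $m \in \Abs$. Next I would compute the Cauchy product $m \otimes G_1$. Since $e_0$ is the unit and $e_1 \otimes G_1 = (0, 1, 1, 1, \ldots)$ is simply $G_1$ shifted by one position, the product telescopes to $G_1 - (0, 1, 1, 1, \ldots) = e_0$; this is exactly the $\alpha = 1$ instance of the identity above, the very instance the geometric-series proposition had to exclude because its denominator $1 - \alpha$ vanishes.

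It then remains to read off the two values of $\Sigma$. On one hand $\Sigma(m) = \Sigma(e_0) - \Sigma(e_1) = 1 - 1 = 0$; on the other hand $m \otimes G_1 = e_0$ lies in $\Sem$ with $\Sigma(m \otimes G_1) = \Sigma(e_0) = 1 \neq 0$. These are precisely the hypotheses of Proposition~\ref{prop:not_in}, which yields the claim. I do not anticipate a genuine obstacle: the whole argument rests on choosing an absolutely convergent $m$ that cancels the divergent behaviour of $G_1$ (making $m \otimes G_1$ convergent) while keeping $\Sigma(m) = 0$ apart from $\Sigma(m \otimes G_1) \neq 0$, and $m = e_0 - e_1$ achieves this in the simplest possible way. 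The only point needing care is the elementary verification that the Cauchy product is indeed $e_0$.
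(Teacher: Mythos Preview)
Your proposal is correct and follows exactly the same approach as the paper: apply Proposition~\ref{prop:not_in} with $m = e_0 - e_1$, using $G_1 \otimes (e_0 - e_1) = e_0 \in \Sem$, $\Sigma(e_0 - e_1) = 0$, and $\Sigma(e_0) = 1 \neq 0$. The only difference is that you spell out the telescoping computation of the Cauchy product, whereas the paper just states the identity.
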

\begin{proof} This follows from proposition~\ref{prop:not_in},~as
$$ G_1 \otimes (e_0 - e_1) = e_0 \in \Sem$$
with $\Sigma(e_0) = 1 \neq 0$ while $\Sigma(e_0 - e_1) = 0$.
\end{proof}

\begin{definition}
For $n \in \mb N$, let us define
\begin{gather*}
T_{ n} = \Biggl((-1)^k \binom {n + k} n \Biggr)_{k \in \mb N} = \Biggl((-1)^k \frac {(k+1)^{\overline n}}{n!} \Biggr)_{k \in \mb N} \\
{AP}_{\!n} = \Bigl((-1)^k (k+1)^n\Bigr)_{k \in \mb N} = \bigl(1, -2^n, 3^n, -4^n, 5^n, \ldots\bigr)
\end{gather*}
where $x^{\overline n}$ denotes the rising factorial of $x$ to the $n$~:
$$ x^{\overline n} = x \times (x + 1) \times \ \cdots \ \times (x + n - 1) $$
\end{definition}
It can be remarked that $T_0 = {AP}_{0} = \bigl((-1)^k)_{k \in \mb N} = G_{-1}$.

% $$ \sum_{k = 0}^n (-1)^k (-1)^{n - k} = \sum_{k = 0}^n (-1)^n = (-1)^n (n + 1) $$

% $$ \sum_{k = 0}^n (-1)^{n - k} (-1)^k (k + 1) = (-1)^n \sum_{k = 0}^n (k + 1) $$

\begin{proposition}
For all $n \in \mb N$, we have $T_{n} \in \widetilde \Abs$ with $\Sigma(T_{ n}) = \frac 1 {2^{n + 1}}$.
\end{proposition}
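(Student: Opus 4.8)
The plan is to mimic exactly the strategy used for the geometric sequences: exhibit a single element of $\Abs$ with nonzero sum whose Cauchy product with $T_n$ lands back in $\Abs$, and then read off the value of $\Sigma(T_n)$ from the definition of the extension. The candidate multiplier is $(e_0 + e_1)^{\otimes(n+1)}$, and the identity I would establish is
$$ T_n \otimes (e_0 + e_1)^{\otimes(n+1)} = e_0. $$
Granting this, the conclusion is immediate: the sequence $(e_0 + e_1)^{\otimes(n+1)}$ has finite support (its $k$-th term is $\binom{n+1}{k}$), so it belongs to $\Abs$, and $\Sigma\bigl((e_0+e_1)^{\otimes(n+1)}\bigr) = \sum_{k=0}^{n+1}\binom{n+1}{k} = 2^{n+1} \neq 0$. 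The displayed identity shows $(e_0+e_1)^{\otimes(n+1)} \otimes T_n = e_0 \in \Abs$, so by the definition of $\Ext_{\Abs}$ we get $T_n \in \widetilde\Abs$ together with
$$ \Sigma(T_n) = \frac{\Sigma(e_0)}{\Sigma\bigl((e_0+e_1)^{\otimes(n+1)}\bigr)} = \frac{1}{2^{n+1}}. $$

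It then remains to prove the Cauchy-product identity, which I would handle by induction on $n$. The base case $n = 0$ is already available from the earlier work: since $T_0 = G_{-1}$, the geometric-sequence computation (with $\alpha = -1$) gives $T_0 \otimes (e_0 + e_1) = e_0$. For the inductive step the decisive sub-identity is
$$ T_n \otimes (e_0 + e_1) = T_{n-1}, $$
since with it one factors $T_n \otimes (e_0+e_1)^{\otimes(n+1)} = \bigl(T_n \otimes (e_0+e_1)\bigr) \otimes (e_0+e_1)^{\otimes n} = T_{n-1} \otimes (e_0+e_1)^{\otimes n}$, using commutativity and associativity of $\otimes$, and the induction hypothesis finishes the argument.

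To prove the sub-identity I would compute the product termwise. Because $e_0 + e_1$ is supported on the indices $0$ and $1$ with both entries equal to $1$, one has $\bigl(T_n \otimes (e_0+e_1)\bigr)_k = (T_n)_k + (T_n)_{k-1}$ for $k \geq 1$ and $(T_n)_0$ for $k = 0$. Substituting $(T_n)_k = (-1)^k \binom{n+k}{n}$ and factoring out $(-1)^k$ reduces the claim to Pascal's rule $\binom{n+k}{n} - \binom{n+k-1}{n} = \binom{n+k-1}{n-1}$, whose right-hand side is precisely $(-1)^{-k}(T_{n-1})_k$ up to the extracted sign; the boundary term at $k=0$ gives $1 = (T_{n-1})_0$ directly. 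The only non-formal ingredient is this single binomial manipulation, which is routine; I expect the sole delicate point to be the bookkeeping of the $k = 0$ boundary term, where an off-by-one slip is the main thing to guard against.
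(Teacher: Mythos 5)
Your proof is correct, but it follows a genuinely different route from the paper's. The paper's proof is a one-liner: it writes $T_n = \bigotimes_{k = 0}^n G_{-1}$ (a Cauchy product of $n+1$ copies of $G_{-1}$) and invokes the previously established structural facts that $\widetilde \Abs$ is stable by product and that $\Sigma$ is multiplicative on it, yielding $\Sigma(T_n) = \Sigma(G_{-1})^{n+1} = 1/2^{n+1}$. You instead work directly from the definition of $\Ext_{\Abs}$: you exhibit an explicit multiplier $(e_0 + e_1)^{\otimes (n+1)} \in \Abs$ of finite support with nonzero sum $2^{n+1}$ which reduces $T_n$ to $e_0$, and you verify the required identity $T_n \otimes (e_0+e_1)^{\otimes(n+1)} = e_0$ by induction via Pascal's rule. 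The two key identities are equivalent in content --- your sub-identity $T_n \otimes (e_0 + e_1) = T_{n-1}$ is precisely the paper's factorization after cancelling one factor through $G_{-1} \otimes (e_0 + e_1) = e_0$ --- but the proofs lean on different parts of the framework. Your argument is more self-contained: it needs only the definition of the extension and never uses the corollary that $\widetilde \Abs$ is a unital subalgebra on which $\widetilde \varphi$ is an algebra homomorphism; it also actually proves the combinatorial identity, which the paper asserts without verification. What the paper's approach buys is brevity and reusability: once the algebra machinery is in place, the factorization of $T_n$ into copies of $G_{-1}$ does all the work, and the same pattern (expressing a new sequence as a product or linear combination of sequences already in $\widetilde \Abs$) is reused immediately afterwards for $AP_n$.
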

\begin{proof}
This is a direct consequence of the fact that $\widetilde \Abs$ is stable by product and that
$$ T_{ n} = \bigotimes_{k = 0}^n G_{-1} $$
\end{proof}

% We might have

% $$ \bigl(G_{-1})^n_k = (-1)^n \binom {k+n} n $$

% $$\bigl(T_n\bigr)_k = (-1)^k \binom {n + k} n = \frac {(-1)^k (k+1)^{\overline n}}{n!} $$

% $$ T_0 = G_{-1} $$

% $$ T_m \otimes T_n = T_{m + n + 1} $$

% As it is well known that
% $$ \sum_{k = 0}^n \binom {a + k} a \binom{b + n - k} b = \binom{a + b + 1 + n}{a + b + 1} $$
% it is clear that for all $m, n \in \mb N$,
% $$ T_m \otimes T_n = T_{m + n + 1} $$
% Moreover, since $T_0 = G_{-1}$, we have
% $$ \fall {n \in \mb N} T_n = \bigotimes_{k = 0}^n G_{-1} $$
% and hence
% $$ \fall {n \in \mb N} \Sigma(T_n) = \frac 1 {2^{n+1}} $$

Let $(B^+_n)$ denote the second Bernoulli numbers, and $\stirling n k$ the Stirling numbers of the second kind.

\begin{proposition}
For all $n \in \mb N$, $AP_{n} \in \widetilde \Abs$~with
$$ \Sigma(AP_{n}) = \frac {2^{n + 1} - 1}{n + 1} B^+_{n+1} $$
\end{proposition}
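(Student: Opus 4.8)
The plan is to expand $AP_n$ in the family $(T_m)_{0 \le m \le n}$, all of whose members already lie in $\widetilde \Abs$, and then conclude by linearity of $\Sigma$. The bridge is the classical expansion of a monomial in the rising-factorial basis: for every integer $j$ one has $j^n = \sum_{m=0}^n (-1)^{n-m} \stirling n m j^{\overline m}$, which follows from the Stirling-second-kind identity $j^n = \sum_m \stirling n m j^{\underline m}$ together with $j^{\underline m} = (-1)^m (-j)^{\overline m}$. Setting $j = k+1$ and recalling that $(k+1)^{\overline m} = m!\binom{k+m}{m}$, the $k$-th entry of $AP_n$ becomes $(-1)^k (k+1)^n = \sum_{m=0}^n (-1)^{n-m} m!\stirling n m \cdot (-1)^k \binom{k+m}{m}$, which is exactly the $k$-th entry of $\sum_m (-1)^{n-m} m! \stirling n m \, T_m$. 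Hence, as an identity of sequences,
$$ AP_n = \sum_{m=0}^n (-1)^{n-m}\, m!\, \stirling n m\, T_m. $$

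Since each $T_m$ belongs to $\widetilde \Abs$ and $\widetilde \Abs$ is a unital subalgebra---in particular a vector subspace---of $\mb C^{\mb N}$, the finite combination above immediately gives $AP_n \in \widetilde \Abs$. Applying the linearity of $\Sigma$ on $\widetilde \Abs$ and the already-established value $\Sigma(T_m) = 1/2^{m+1}$, I obtain
$$ \Sigma(AP_n) = \sum_{m=0}^n (-1)^{n-m}\, \frac {m!\, \stirling n m}{2^{m+1}}. $$
This is the entire algebraic content of the statement; what remains is a purely arithmetic identity between this alternating Stirling sum and the announced Bernoulli expression.

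To settle that arithmetic identity I would pass to exponential generating functions. Using $\sum_{n \ge m} \stirling n m \frac {t^n}{n!} = \frac {(e^t-1)^m}{m!}$, the sequence $a_n = \sum_m m! \stirling n m (-1/2)^m$ has EGF $\sum_m \bigl(-\tfrac12(e^t-1)\bigr)^m = \frac 2{e^t+1}$; since $\Sigma(AP_n) = \tfrac{(-1)^n}2 a_n$, replacing $t$ by $-t$ yields $\sum_n \Sigma(AP_n) \frac{t^n}{n!} = \frac{e^t}{e^t+1}$. On the other side, writing $\beta(t) = \frac{t e^t}{e^t - 1} = \sum_N B^+_N \frac{t^N}{N!}$ for the EGF of the second Bernoulli numbers (note $\sum_n \frac{B^+_{n+1}}{n+1}\frac{s^n}{n!} = \frac{\beta(s)-1}{s}$) and splitting $2^{n+1}-1 = 2\cdot 2^n - 1$, the proposed closed form is seen to have EGF $\frac{\beta(2t) - \beta(t)}{t}$, which simplifies---via $e^{2t}-1 = (e^t-1)(e^t+1)$---again to $\frac{e^t}{e^t+1}$. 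Matching the two generating functions proves the identity.

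The main obstacle is exactly this last step: everything up to the alternating Stirling sum is a mechanical consequence of the framework (stability of $\widetilde \Abs$ under products and linearity of $\Sigma$), whereas identifying that sum with $\frac{2^{n+1}-1}{n+1}B^+_{n+1}$ is where the Bernoulli numbers must genuinely enter. I expect the only delicate points to be the bookkeeping of signs in the rising/falling factorial conversion and the correct normalization $B^+_1 = +\tfrac12$, so that the case $n = 0$ reproduces $\Sigma(G_{-1}) = \tfrac12$; alternatively one could simply cite the known Stirling--Bernoulli identity rather than rederive it through generating functions.
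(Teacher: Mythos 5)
Your proof is correct, and its first half coincides exactly with the paper's: the paper also writes $x^n = \sum_{k=0}^n (-1)^{n-k}\stirling n k x^{\overline k}$, deduces $AP_n = \sum_{k=0}^n (-1)^{n-k}\,k!\,\stirling n k\,T_k$, and concludes $AP_n \in \widetilde\Abs$ by linearity, just as you do. Where you genuinely diverge is the final arithmetic step. The paper does not compute the alternating Stirling sum at all; it simply cites Worpitzky's 1883 representation of the second Bernoulli numbers,
$$ B^+_n = \frac n {2^{n + 1} - 2} \sum_{k = 0}^{n - 1} (-2)^{-k}\, k!\, \stirling n {k + 1}, $$
and asserts that the value follows. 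Note that even granting this formula, matching it against $\sum_{m=0}^n (-1)^{n-m}\, m!\,\stirling n m\, 2^{-(m+1)}$ requires a nontrivial reindexing (Worpitzky's sum involves $\stirling {n+1}{k+1}$ after the shift $n \to n+1$, yours involves $\stirling n m$), a step the paper leaves entirely implicit. Your exponential-generating-function derivation replaces that citation with a self-contained verification: the computation $\sum_m \bigl(-\tfrac12(e^t-1)\bigr)^m = \tfrac 2{e^t+1}$ is legitimate as a formal power series identity (since $e^t - 1$ has no constant term), the identification of both sides with $\tfrac{e^t}{e^t+1}$ is correct, and your normalization check $B^+_1 = +\tfrac12$ against $\Sigma(G_{-1}) = \tfrac12$ confirms the signs. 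So your route costs a page of generating-function bookkeeping but buys verifiability; the paper's is shorter but rests on an external reference plus an unstated Stirling-number reindexing.
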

\begin{proof}
From the equality
$$ \fall {x \in \mb R} \fall {n \in \mb N} x^n = \sum_{k = 0}^n (-1)^{n - k} \stirling n k x^{\overline k}, $$
we directly deduce that
$$ \fall {n \in \mb N} AP_{n} = \sum_{k = 0}^n (-1)^{n - k} \, k! \, \stirling n k  \, T_k $$
so that $AP_{n} \in \widetilde \Abs$, and the value $\sum(AP_{n})$ follows from the representation of second Bernoulli numbers~$B^+_n$ using Worpitzky numbers~\cite{Worpitzky1883}:
$$ B^+_n = \frac n {2^{n + 1} - 2} \sum_{k = 0}^{n - 1} (-2)^{-k} k! \stirling n {k + 1} $$
\end{proof}

\begin{definition}[Powers]
For all $n \in \mb N$, we define
$$ P_n = \bigl((k + 1)^n\bigr)_{k \in \mb N} = (1, 2^n, 3^n, \ldots) $$

\end{definition}

\begin{proposition}
$$ P_1 \not \in \widetilde \Sem $$
\end{proposition}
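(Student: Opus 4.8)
The plan is to reuse Proposition~\ref{prop:not_in} exactly as in the proof that $G_1 \not\in \widetilde\Sem$, now with $M = \Abs$, $F = \Sem$ and $\varphi = \Sigma$. It suffices to exhibit a single absolutely convergent sequence $m$ with $\Sigma(m) = 0$ such that $m \otimes P_1$ is convergent and has nonzero sum; Proposition~\ref{prop:not_in} then forces $P_1 \not\in \widetilde\Sem$.

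The natural first guess is the finite-difference witness $m = e_0 - e_1$ that worked for $G_1$, and this is where I expect the only real subtlety. Convolving $P_1 = (1,2,3,\ldots)$ by $e_0 - e_1$ amounts to taking the first difference of the sequence, which gives $P_1 \otimes (e_0 - e_1) = G_1 = (1,1,1,\ldots)$, a \emph{divergent} series. So the hypothesis $m \otimes P_1 \in \Sem$ of Proposition~\ref{prop:not_in} fails for this $m$, and a single difference is not enough. The fix is to iterate: since $P_1$ is a degree-one polynomial sequence, two successive differences should collapse it. I would therefore take
$$ m = (e_0 - e_1)^{\otimes 2} = e_0 - 2 e_1 + e_2, $$
which is finitely supported, hence in $\Abs$, and satisfies $\Sigma(m) = 1 - 2 + 1 = 0$.

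It then remains to check the convolution. Using associativity together with the identity $G_1 \otimes (e_0 - e_1) = e_0$ already established in the proof for $G_1$, I would compute
$$ P_1 \otimes (e_0 - e_1)^{\otimes 2} = \bigl(P_1 \otimes (e_0 - e_1)\bigr) \otimes (e_0 - e_1) = G_1 \otimes (e_0 - e_1) = e_0, $$
so that $m \otimes P_1 = e_0 \in \Sem$ with $\Sigma(e_0) = 1 \neq 0$. All three hypotheses of Proposition~\ref{prop:not_in} are then met, giving $P_1 \not\in \widetilde\Sem$. The computation is entirely routine; the only genuine obstacle is recognizing that the single difference used for $G_1$ leaves a divergent series, so one must apply the difference operator once more (and, more generally, $n+1$ times to rule out $P_n$).
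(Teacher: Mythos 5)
Your proof is correct and is essentially the paper's own: the paper applies Proposition~\ref{prop:not_in} with exactly the same witness, writing directly $P_1 \otimes (e_0 - 2e_1 + e_2) = e_0$ with $\Sigma(e_0 - 2e_1 + e_2) = 0$ and $\Sigma(e_0) = 1 \neq 0$; you merely derive that witness by factoring it as $(e_0 - e_1)^{\otimes 2}$ and convolving in two steps. One tiny caveat on your closing aside: for general $P_n$ the $(n+1)$-fold difference is not $e_0$ but a finitely supported sequence of sum $n! \neq 0$ (e.g.\ $P_2 \otimes (e_0-e_1)^{\otimes 3} = e_0 + e_1$), which still satisfies the hypotheses of Proposition~\ref{prop:not_in}.
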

\begin{proof}
We have $P_1 \otimes (e_0 - 2 e_1 + e_2) = e_0$, with $\varphi(e_0 - 2 e_1 + e_2) = 0$ and $\varphi(e_0) \neq 0$.
\end{proof}

The previous proposition shows that considering extension $\widetilde \Sem$ is not sufficient for affecting a sum to $P_1$. This is obviously not suprising as it is well know that a stable extension assigning a sum to $P_1$ would lead to inconsistencies such as~$1 = 0$. % this extension is stable, and 
However, other extensions, based on other products, can be considered.

\section{A second product}

% The construction presented in section~\ref{sec:construction} can be applied to other algebras based on complex sequences.

In this section, we will consider the following product:
$$ (u \circledast v)_n = \sum_{i j = n + 1} u_{i - 1} v_{j - 1} $$
In terms of $e_k$, this corresponds to having $e_i \circledast e_j = e_k$ with $k + 1 = (i + 1) (j + 1)$.
This product is associative and commutative, and has $e_0$ as neutral element. Moreover, the set $\Fin$ of finite sequences is a unital subalgebra of $(\mb C^{\mb N}, \circledast)$.

% We now need to find a submonoid and a stable subset.

% First, it is clear that $\Abs$ is stable:

% $$ \sum_{k = 0}^n \bigl|(\mb u \circledast \mb v)_k\bigr| \leq \sum_{k = 0}^n \bigl|\mb u_k\bigr| \sum_{k = 0}^n \bigl|\mb v_k\bigr| \leq \sum \mb u \sum \mb v $$

% What about $\faktor \Abs {\Abs^\star}$?

% Let $a, b \in \Abs$ and $u, v \in \faktor \Abs {\Abs^\star}$ such that...

% We have $(a \otimes b) \otimes u \in \Abs$ and $(a \otimes b) \otimes v \in \Abs$.

% We remark that

% $$(e_i \circledast e_k) \otimes (e_j \circledast e_k) = e_{(ik + i + k) + (jk + j + k)} = \bigl((e_i \otimes e_j) \circledast e_k\bigr) \otimes e_k$$

% Obviously, $\Fin$ is a unital subalgebra of $(\mb C^{\mb N}, \circledast)$

It is clear that if $x \in \Sem$ (resp. $\Abs$, $\Fin$) then so is $x \circledast e_k$ and hence, by linearity, that $\Sem$ (resp. $\Abs$, $\Fin$) is $\Fin$-stable w.r.t.\ $\circledast$ and we have $\Sigma(x \circledast e_k) = \Sigma(x) \Sigma(e_k)$.

\begin{proposition}
$\widetilde \Sem$ (resp. $\widetilde \Abs$) is $\Fin$-stable w.r.t.~$\circledast$ and $\tphi$ is a $\Fin$-form on $\widetilde \Sem$ with regard to $\circledast$.
\end{proposition}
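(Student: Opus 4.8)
The plan is to reduce both assertions to the generators $e_k$ of $\Fin$ and to bridge the two products by a single termwise identity. The inclusion is immediate: finite sequences converge, so $\Fin \subseteq \Abs \subseteq \Sem$, and by regularity (Proposition~\ref{prop:stability}) $\Sem \subseteq \widetilde\Sem$, whence $\Fin \subseteq \widetilde\Sem$. Since $\widetilde\Sem$ is a vector subspace (Proposition~\ref{prop:linearity}) and $\circledast$ is bilinear, it suffices to show, for every $k \in \mb N$ and every $u \in \widetilde\Sem$, that $e_k \circledast u \in \widetilde\Sem$ and that $\tphi(e_k \circledast u) = \tphi(u)$. Writing a general $m \in \Fin$ as a finite combination $\sum_k c_k e_k$, both the $\Fin$-stability and the morphism identity $\tphi(m \circledast u) = \tphi(m)\,\tphi(u)$ then follow by linearity of $\circledast$ and of $\tphi$, using $\tphi(e_k) = \Sigma(e_k) = 1$; the normalisation $\tphi(e_0) = \Sigma(e_0) = 1$ handles the unit of $\circledast$.

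The heart of the argument is the identity, valid for all sequences $m, u \in \mb C^{\mb N}$,
$$ (e_k \circledast m) \otimes (e_k \circledast u) = e_k \otimes \bigl(e_k \circledast (m \otimes u)\bigr). $$
I would establish it by testing on basis elements $m = e_i$, $u = e_j$: from $e_a \circledast e_b = e_{(a+1)(b+1) - 1}$ and $e_a \otimes e_b = e_{a + b}$ one computes that both sides equal $e_{(k+1)(i + j + 1) + k - 1}$. As each coordinate of either side is a finite bilinear expression in the entries of $m$ and $u$, agreement on all basis pairs extends to arbitrary sequences.

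Granting this, fix $u \in \widetilde\Sem$ and pick $m \in \Abs$ with $\Sigma(m) \neq 0$ and $m \otimes u \in \Sem$. Put $m' = e_k \circledast m$. Then $m' \in \Abs$ (as $\Abs$ is $\Fin$-stable under $\circledast$) and $\Sigma(m') = \Sigma(e_k)\,\Sigma(m) = \Sigma(m) \neq 0$. The identity gives $m' \otimes (e_k \circledast u) = e_k \otimes (e_k \circledast (m \otimes u))$, and the right-hand side lies in $\Sem$: indeed $m \otimes u \in \Sem$, $\Sem$ is $\Fin$-stable under $\circledast$, and $\Sem$ is $\Abs$-stable under $\otimes$ with $e_k \in \Abs$. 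Hence $e_k \circledast u \in \widetilde\Sem$, and
$$ \tphi(e_k \circledast u) = \frac{\Sigma\bigl(e_k \otimes (e_k \circledast (m \otimes u))\bigr)}{\Sigma(m')} = \frac{\Sigma(m \otimes u)}{\Sigma(m)} = \tphi(u), $$
where the middle step collapses two factors $\Sigma(e_k) = 1$ (one for each product) and the last uses the very definition of the extension.

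The single delicate point is discovering and verifying the commutation identity between $\circledast$ and $\otimes$; once it is in hand, every remaining step is a direct appeal to Mertens' theorem, regularity, linearity, and the already-noted $\Fin$-stability of $\Sem$ and $\Abs$ under $\circledast$. The statement for $\widetilde\Abs$ is proved verbatim, replacing $\Sem$ by $\Abs$ throughout and using that $\Abs$ is a subalgebra for $\otimes$.
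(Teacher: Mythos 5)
Your proof is correct and takes essentially the same route as the paper: the identical commutation identity $(e_k \circledast m) \otimes (e_k \circledast u) = \bigl((m \otimes u) \circledast e_k\bigr) \otimes e_k$, verified on basis elements and extended by bilinearity, then used with the witness $m' = e_k \circledast m$ (which stays in $\Abs$ with unchanged nonzero sum) to conclude $e_k \circledast u \in \widetilde \Sem$, and finally linearity over $\Fin$. The only difference is one of completeness in your favour: you explicitly verify the $\Fin$-form identity $\tphi(p \circledast u) = \tphi(p)\,\tphi(u)$ and the normalisation $\tphi(e_0) = 1$, which the paper leaves implicit after establishing stability.
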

\begin{proof}
Let is first remark that for all $i, j, k \in \mb N$,
$$(e_i \circledast e_k) \otimes (e_j \circledast e_k) = e_{(ik + i + k) + (jk + j + k)} = \bigl((e_i \otimes e_j) \circledast e_k\bigr) \otimes e_k$$
As a consequence, given $x \in \widetilde \Sem$ and $m \in \Abs$ such that $m \otimes x \in \Sem$, for all $k \in \mb N$, we have
$$ (m \circledast e_k) \otimes (x \circledast e_k) = \bigl((m \otimes x) \circledast e_k\bigr) \otimes e_k  \in \Sem $$
with $m \circledast e_k \in \Abs$, so that $x \circledast e_k \in \widetilde \Sem$. By linearity, for all $p \in \Fin$, one has $x \circledast p \in \widetilde \Sem$.
\end{proof}

% $$ m \circledast e_k = \sum_i m_i e_i \circledast e_k = \sum_i m_i e_{(i + 1)(k + 1) - 1} $$
% $$ x \circledast e_k = \sum_j x_j e_j \circledast e_k = \sum_j x_j e_{(j + 1)(k + 1) - 1} $$
% $$ (m \circledast e_k) \otimes (x \circledast e_k) = \sum_{i,j} m_i x_j (e_i \circledast e_k) \otimes (e_j \circledast e_k) $$

This suggests to consider the extension of $\Sigma$ on $\widetilde \Sem$~to
$$ \Ext^{\circledast}_{\Fin}\bigl(\widetilde \Sem\bigr) $$
This extension is linear and preservative but it is not stable, as we will see after the next result.

\begin{proposition}
For all $n \in \mb N$, $P_n \in \Ext^{\circledast}_{\Fin}\bigl(\widetilde \Sem\bigr)$ with
$$ \Sigma(P_n) = - \frac {B^+_{n+1}}{n+1} = \zeta(-n) $$
\end{proposition}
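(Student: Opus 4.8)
The plan is to realise the classical functional relation $\eta(s) = (1 - 2^{1-s})\zeta(s)$ between the Dirichlet eta and zeta functions as a single algebraic identity in $(\mb C^{\mb N}, \circledast)$, thereby reducing the summation of $P_n$ to that of $AP_n$, which has already been carried out. Under the $\circledast$ product the sequences are naturally indexed by the positive integers $m = k + 1$, and $\circledast$ is precisely Dirichlet convolution; in particular $e_1$ plays the role of the indicator of $2$, so that $e_1 \circledast P_n$ retains only the even-indexed entries of $P_n$, suitably rescaled.

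First I would establish the key identity
$$ P_n \circledast (e_0 - 2^{n+1} e_1) = AP_n. $$
To see this, note that $P_n \circledast e_0 = P_n$ and that, from $e_i \circledast e_j = e_k$ with $k + 1 = (i+1)(j+1)$, the entry $(e_1 \circledast P_n)_k$ equals $\bigl((k+1)/2\bigr)^n$ when $k$ is odd and $0$ otherwise. Multiplying by $2^{n+1}$ turns these into $2(k+1)^n$ at the odd positions, i.e. at the even values of $m = k+1$, which is exactly $P_n - AP_n$, since $AP_n$ differs from $P_n$ only by a sign on the even-$m$ entries. This index verification is the one genuinely computational point and the main obstacle: it amounts to checking that $\circledast$-multiplication by $e_1$ extracts the rescaled even part, which is a finite calculation.

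With the identity in hand the rest follows from the extension machinery. The multiplier $m = e_0 - 2^{n+1} e_1$ lies in $\Fin$, and since $\Sigma(e_0) = \Sigma(e_1) = 1$ we have $\Sigma(m) = 1 - 2^{n+1}$, which is nonzero for every $n \in \mb N$ because $n + 1 \geq 1$. As $m \circledast P_n = AP_n \in \widetilde \Abs \subseteq \widetilde \Sem$ by the earlier proposition, the definition of $\Ext^{\circledast}_{\Fin}\bigl(\widetilde \Sem\bigr)$ immediately gives $P_n \in \Ext^{\circledast}_{\Fin}\bigl(\widetilde \Sem\bigr)$, and the value is read off as
$$ \Sigma(P_n) = \frac{\Sigma(AP_n)}{\Sigma(m)} = \frac{1}{1 - 2^{n+1}} \cdot \frac{2^{n+1} - 1}{n+1}\, B^+_{n+1} = - \frac{B^+_{n+1}}{n+1}. $$

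Finally I would identify this with $\zeta(-n)$ by invoking the standard evaluation $\zeta(-n) = - B^+_{n+1}/(n+1)$; the second Bernoulli numbers are the correct convention here, since they differ from the first only at index $1$ and already agree with the analytic value at $n = 0$, where $-B^+_1/1 = -1/2 = \zeta(0)$.
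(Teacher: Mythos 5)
Your proposal is correct and follows the same route as the paper: the key identity $P_n \circledast (e_0 - 2^{n+1} e_1) = AP_n$, membership via the multiplier $e_0 - 2^{n+1}e_1 \in \Fin$ with nonzero sum, and the value read off from the earlier evaluation of $\Sigma(AP_n)$. You simply spell out the details the paper leaves implicit (the even-part extraction by $e_1$ under $\circledast$, the check $\Sigma(e_0 - 2^{n+1}e_1) = 1 - 2^{n+1} \neq 0$, and the Bernoulli-number convention behind $\zeta(-n) = -B^+_{n+1}/(n+1)$), which is fine.
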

\begin{proof}
We have
$$ P_n \circledast (e_0 - 2^{n+1} e_1) = (1, -2^n, 3^n, -4^n, 5^n, -6^n, \ldots) = AP_{n} $$
so that $P_n \in \Ext^{\circledast}_{\Fin}\bigl(\widetilde \Sem\bigr)$~and
$$ \Sigma(P_n) = \frac {\Sigma({AP}_{\!n})}{\Sigma(e_0 - 2^{n+1} e_1)} = - \frac {B^+_{n + 1}} {n + 1} = \zeta(-n)$$
\end{proof}
% \subsection{Some results}

% $$ \bigl(n + 1\bigr)_{n \in \mb N} \circledast (e_0 - 4 e_1) = \Bigl(\bigl(-1\bigr)^n (n+1)\Bigr)_{n \in \mb N} = (1, -2, 3, -4, 5, -6, \ldots)$$

% $$ G_1 \circledast (e_0 - 2 e_1) = \Bigl(\bigl(-1\bigr)^n\Bigr)_{n \in \mb N} = (1, -1, 1, -1, 1, -1, \ldots) = G_{-1}$$

% \begin{multline*}
% \mb{P}_{\! n} = (1, 2^n, 3^n, 4^n, 5^n, 6^n, \ldots) \\ = (1, 2^n 1^n, 3^n, 2^n 2^n, 5^n, 2^n 3^n, \ldots)
% \end{multline*}

% $$ \bigl(\mb{P}_{\!n} \circledast e_1\bigr)_k = \sum (k + 1)^n e_{2 k + 1} =(0, 1^n, 0, 2^n, 0, 3^n, 0, 4^n, \ldots)$$

% $$ P_n \circledast (e_0 - 2^{n+1} e_1) = (1, -2^n, 3^n, -4^n, 5^n, -6^n, \ldots) = AP_{n} $$

% So that $ P_{\! n} \in \Ext_{\Fin}\bigl(\Ext_{\Abs}(\Sem)\bigr) $ with
% $$ \Sigma(P_n) = \frac {\Sigma({AP}_{\!n})}{\Sigma(e_0 - 2^{n+1} e_1)} = - \frac {B^+_{n + 1}} {n + 1} = \zeta(-n)$$

% @@@ I find $\zeta(-n) = (-1)^n \dfrac {B^-_{n+1}}{n + 1}$ on Wikipedia! But
% $$ B^-_{n+1} = (-1)^{n+1} B^+_{n+1} $$
% so it's okay.

We thus have

\begin{align*}
1 + 1 + 1 + 1 + \cdots & = \Sigma(P_0) =  - \frac 1 2 &
1 + 2 + 3 + 4 + \cdots & = \Sigma(P_1) =  - \frac 1 {12} \\
1 + 4 + 9 + 16 + \cdots & = \Sigma(P_2) = \vphantom{- \frac 1 2} 0 &
1 + 8 + 27 + 64 + \cdots & = \Sigma(P_3) =  \frac 1 {120}
\end{align*}
and we can now rigorously express the chain of reasoning, presented in the introduction, that leads to the \emph{sum of all the integers}, i.e.\ to $S = \Sigma(P_1)$:
\begin{enumerate}
	\item $G_{-1} \otimes (e_0 + e_1) = e_0$ so that $G_{-1} \in \widetilde \Sem$ and
	$$A = \Sigma(G_{-1}) = \frac {\Sigma(e_0)}{\Sigma(e_0 + e_1)} = \frac 1 2~;$$
	\item $AP_1 \otimes (e_0 + e_1) = G_{-1}$ so that $AP_1 \in \widetilde \Sem$ and
	$$ B = \Sigma(AP_1) = \frac {\Sigma(G_{-1})}{\Sigma(e_0 + e_1)} = \frac 1 4~;$$
	\item $P_1 \circledast (e_0 - 4 e_1) = AP_1$ so that $P_1 \in \Ext_{\Fin}^\circledast(\widetilde \Sem)$~and
	$$ S = \Sigma(P_1) = \frac {\Sigma(AP_1)}{\Sigma(e_0 - 4 e_1)} = - \frac 1 {12} $$
\end{enumerate}

% Regarding stability, we have seen that it holds in $\widetilde \Sem$ as a direct consequence of the cancellation property w.r.t.~$\otimes$. In $\Ext_{\Fin}(\widetilde \Sem)$, operation $\otimes$ no longer verifies these properties as it has been replaced by $\circledast$. As a consequence, if at least $u$ and $v$ is not in $\widetilde \Abs$, then $u \otimes v$ need not be in $\Ext_{\Fin}(\widetilde \Sem)$ and, even if it is the case, this does not imply that $\Sigma(u \otimes v) = \Sigma(u) \Sigma(v)$.

Since $\Ext_{\Fin}^\circledast(\widetilde \Sem)$ is based on the $\circledast$-product, its is irrelevant to consider the Cauchy product $u \otimes v$ of two sequences $u$ and $v$, unless they both belong to $\widetilde \Sem$ (and at least one belongs to $\widetilde \Abs$). Otherwise, even if $w = u \otimes v \in \Ext_{\Fin}^\circledast(\widetilde \Sem)$, it is irrelevant to see $w$ as $u \otimes v$ so that one need not have $\Sigma(w) = \Sigma(u) \Sigma(v)$.

For instance, we have $P_0, P_1 \in \Ext_{\Fin}^\circledast(\widetilde \Sem)$ and $P_1 = P_0 \otimes P_0$ but
$$\Sigma(P_1) = - \frac 1 {12} \neq \frac 1 4 = \Sigma(P_0)^2.$$
This also entails that stability -- which, in $\otimes$-extensions, was a direct consequence of the cancellation property -- is not a general property of the $\circledast$-extension. For instance, even though $\Ext_{\Fin}^\circledast(\widetilde \Sem)$ contains both $P_0 = (1, 1, 1, 1, \ldots)$ and  $P_0 \otimes e_1 = (0,1,1,1,1, \ldots)$, we have
$$ \Sigma(0,1,1,1,1, \ldots) \neq \Sigma(1, 1, 1, 1, \ldots) $$
since one has to write $(0,1,1,1,1, \ldots) = P_0 - e_0$ (rather than $P_0 \otimes e_1$) so that
$$ \Sigma(0,1,1,1,1, \ldots) = \Sigma(P_0) - \Sigma(e_0) = - \frac 3 2 $$
Similarly,
\begin{align*}
0 + 1 + 2 + 3 + 4 + 5 + \ \cdots \ & = \Sigma(P_1 - P_0) = \frac 5 {12} \\
0 + 0 + 1 + 2 + 3 + 4 + \ \cdots \ & = \Sigma(P_1 - 2 P_0 + e_0) = \frac {23} {12} \\
0 + 0 + 0 + 1 + 2 + 3 + \ \cdots \ & = \Sigma(P_1 - 3 P_0 + 2 e_0 + e_1) = \frac {53} {12} \\
\end{align*}

Let us show a few more examples of sum calculations:

\begin{align*}
1 + 2 + 3 + 4 + 5 + 6 + \ \cdots \ & = \Sigma(P_1) = - \frac 1 {12} \\
0 + 1 + 0 + 2 + 0 + 3 + \ \cdots \ & = \Sigma(P_1 \circledast e_1) = - \frac 1 {12} \\
1 + 0 + 2 + 0 + 3 + 0 + \ \cdots \ & = \frac 1 2 \Sigma\bigl((e_0 - 2 e_1) \circledast P_1 + (e_0 - e_1) \circledast P_0\bigr) \\ &  = + \frac 1 {24} \\[5pt]
1 + 3 + 5 + 7 + 9 + 11 + \ \cdots \ & = \Sigma(2 P_1 - P_0) = - \frac 2 3 \\
0 + 1 + 0 + 3 + 0 + 5 + \ \cdots \ & = \Sigma\bigl((2 P_1 - P_0) \circledast e_1\bigr) = - \frac 2 3 \\
1 + 0 + 3 + 0 + 5 + 0 + \ \cdots \ & = \Sigma\bigl(P_1 \circledast (e_0 - 2 e_1)\bigr) = + \frac 1 {12}
\end{align*}

Finally, we present a last result showing that it is not possible to assign a sum to the harmonic sequence in $\Ext_{\Fin}^{\circledast}(\widetilde \Sem)$.

\begin{proposition}
The harmonic sequence $H = \bigl(\frac 1 {n + 1}\bigr)_{n \in \mb N}$ is not in $\Ext_{\Fin}^\circledast(\widetilde \Sem)$.
\end{proposition}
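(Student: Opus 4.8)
The plan is to invoke the non-membership criterion of Proposition~\ref{prop:not_in}, now read in the $\circledast$-setting with $M = \Fin$, $F = \widetilde \Sem$ and $\varphi = \Sigma$ (which is a $\Fin$-form on $\widetilde \Sem$ with respect to $\circledast$ by the earlier proposition). Concretely, it suffices to exhibit a \emph{single} finite sequence $m \in \Fin$ such that $\Sigma(m) = 0$, the sequence $m \circledast H$ lies in $\widetilde \Sem$, and $\Sigma(m \circledast H) \neq 0$: the proposition then forces $H \notin \Ext_{\Fin}^\circledast(\widetilde \Sem)$. The strength of this route is that one bad witness rules out $H$ outright, so there is no need to analyse every $m$ with $\Sigma(m) \neq 0$ separately.

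To locate such an $m$, I would first compute the action of a basis vector on $H$. Since $e_i \circledast e_j = e_k$ with $k + 1 = (i+1)(j+1)$, one finds $(e_i \circledast H)_n = \frac{i+1}{n+1}$ whenever $(i+1) \mid (n+1)$, and $0$ otherwise, so the partial sums of $e_i \circledast H$ are truncated harmonic sums behaving like $\ln N - \ln(i+1) + \gamma$. Hence for $m = \sum_i m_i e_i$ the partial sums of $m \circledast H$ grow like $\bigl(\sum_i m_i\bigr) \ln N - \sum_i m_i \ln(i+1)$; the logarithmic divergence is governed precisely by $\Sigma(m) = \sum_i m_i$ and cancels exactly when $\Sigma(m) = 0$, leaving the finite limit $-\sum_i m_i \ln(i+1)$. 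The simplest choice is $m = e_0 - e_1$, for which $\Sigma(m) = 0$; a direct computation then identifies $m \circledast H$ with the alternating harmonic series $(1, -\tfrac12, \tfrac13, -\tfrac14, \ldots)$, which converges and so belongs to $\Sem \subseteq \widetilde \Sem$, with ordinary sum $\ln 2$. Thus $\Sigma(m \circledast H) = \ln 2 \neq 0$ and Proposition~\ref{prop:not_in} applies.

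The main obstacle is not the final deduction, which is immediate once the witness is fixed, but the small bookkeeping behind the value $\ln 2$: I would take care to confirm that $m \circledast H$ is genuinely convergent (hence in $\Sem$) and that $\Sigma(m \circledast H)$ is then computed as an honest convergent sum via regularity of the extension (Proposition~\ref{prop:stability}), rather than through any extended evaluation. This is what makes the non-vanishing of $\Sigma(m \circledast H)$ unambiguous and legitimises the appeal to the criterion.
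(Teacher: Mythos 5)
Your proposal is correct and takes essentially the same route as the paper: the paper likewise applies the non-membership criterion of Proposition~\ref{prop:not_in} (read in the $\circledast$-setting) with the identical witness $e_0 - e_1$, identifying $H \circledast (e_0 - e_1)$ with the convergent alternating harmonic series whose sum is $\ln 2 \neq 0$. Your asymptotic analysis of $e_i \circledast H$ and the explicit appeal to regularity are just additional motivation and bookkeeping around the same argument.
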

\begin{proof}
We have
\begin{align*}
H \circledast (e_0 - e_1) & = \Bigl(1, \frac 1 2, \frac 1 3, \frac 1 4, \frac 1 5, \frac 1 6, \frac 1 7, \ldots\Bigr) \\
& - \Bigl(0, 1, 0, \frac 1 2, 0, \frac 1 3, 0, \ldots\Bigr) \\
& = \Bigl(1, - \frac 1 2, \frac 1 3, - \frac 1 4, \frac 1 5, - \frac 1 6, \frac 1 7, \ldots\Bigr) \in \widetilde \Sem
\end{align*}
with $\Sigma(e_0 - e_1) = 0$ while $\Sigma\bigl(H \circledast (e_0 - e_1)\bigr) = \ln 2 \neq 0$.
\end{proof}

\bibliographystyle{apalike}

\begin{thebibliography}{}

\end{thebibliography}


\begin{thebibliography}{}

\bibitem[Hardy, 1949]{Hardy_Divergent}
Hardy, G.~H. (1949).
\newblock {\em Divergent Series}.
\newblock Clarendon Press.

\bibitem[Worpitzky, 1883]{Worpitzky1883}
Worpitzky, J. (1883).
\newblock Studien \"uber die {B}ernoullischen und {E}ulerschen {Z}ahlen.
\newblock {\em Journal f{\"u}r die reine und angewandte Mathematik}, 94:203 --
  232.

\end{thebibliography}

% $$ e_i \circledast e_1 = e_{2 i + 1} $$

% $$ P_1 \circledast 2 e_1 = (0, 2, 0, 4, 0, 6, 0, 8, \ldots) $$

% $$ P_1 - P_1 \circledast 2 e_1 = (1, 0, 3, 0, 5, 0, 7, 0, \ldots) $$

% $$ \sum(1, 0, 3, 0, 5, 0, 7, 0, \ldots) = + \frac 1 {12} $$

% $$ (1, 3, 5, 7, 9, \ldots) = 2 P_1 - P_0 $$

% $$ \sum(1, 3, 5, 7, 9, \ldots) = - \frac 1 6 - \frac 1 2 = - \frac 2 3 $$

% $$ (1, 0, 1, 0, 1, 0, 1, ...) = P_0 - P_0 \otimes e_1 $$

% \begin{align*}
% (1, 0, 2, 0, 3, 0, 4, 0, 5, \ldots) & = \frac 1 2 \Bigl(P_1 - 2 P_1 \circledast e_1 + P_0 - P_0 \circledast e_1 \Bigr) \\
% & = \Bigl(\frac 1 2 e_0 - e_1 \Bigr) P_1 + (e_0 - e_1) P_0
% \end{align*}
% so that
% $$ 1 + 0 + 2 + 0 + 3 + 0 + 4 + \ldots = + \frac 1 {24} $$

% $$ P_1 + P_0 - e_1 \circledast P_0 = 2+2+4+4+6+6+\cdots $$
% $$ P_1 + P_0 - e_1 \circledast P_0 - 2 P_1 \circledast e_1 = 2+0+4+0+6+0+\cdots $$
\end{document}